\theoremstyle{plain}
\newtheorem{theorem}{Theorem}
\newtheorem{coroll}[theorem]{Corollary}
\newtheorem{prop}[theorem]{Proposition}
\newtheorem{claim}[theorem]{Claim}
\theoremstyle{definition}
\newtheorem*{definition*}{Definition}
\theoremstyle{remark}
\newcommand{\CC}{\mathbb{C}}
\newcommand{\ZZ}{\mathbb{Z}}
\newcommand{\sh}[1]{\mathcal{#1}}
\newcommand{\Rm}[1]{\mathrm{#1}}
\newcommand{\Spec}{\operatorname{Spec}}
\newcommand{\tab}[1][0.5cm]{\hspace*{#1}}
\newcommand{\genlegendre}[4]{%
  \genfrac{(}{)}{}{#1}{#3}{#4}%
  \if\relax\detokenize{#2}\relax\else_{\!#2}\fi
}
\DeclarePairedDelimiter{\set}{\{}{\}}
\DeclarePairedDelimiter{\brac}{(}{)}
\DeclareMathAlphabet{\pazocal}{OMS}{zplm}{m}{n}
\newcommand{\Comment}[1]{}
\def\XXint#1#2#3{{\setbox0=\hbox{$#1{#2#3}{\int}$}
     \vcenter{\hbox{$#2#3$}}\kern-.5\wd0}}
\title[On the NAHC for higher-dimensional quasiprojective varieties]{On the nilpotent residue non-abelian Hodge correspondence for higher-dimensional quasiprojective varieties}
\author{Quoc Anh Tran}
\address{A. Tran: Dept. of Mathematics, Statistics, and Computer Science, University of
Illinois at Chicago, Chicago, USA}
\email{atran58@uic.edu}
\date{\today}
\begin{document}
\begin{abstract}
    In \cite{BBT24}, the authors proved that on a projective log smooth variety $\brac*{\bar{X}, D}$ there is a continuous bijection between the moduli space $M^{\Rm{nilp}}_{\Rm{Dol}}\brac*{\bar{X}, D}$ of logarithmic Higgs bundles with nilpotent residues and the moduli space $M^{\Rm{nilp}}_{\Rm{DR}}\brac*{\bar{X}, D}$ of logarithmic connections with nilpotent residues. In this notes, we argue that the map is a homeomorphism. 
\end{abstract}
\maketitle

Let $\brac*{\bar{X}, D}$ be a projective log smooth complex variety, i.e., a projective smooth complex variety $\bar{X}$ with a simple normal crossing divisor $D$. In the sense of \cite{BBT24}, we have two good moduli spaces:
\begin{align*}
    M^{\Rm{nilp}}_{\Rm{DR}}\brac*{\bar{X}, D, r} &= \left\{ (E, \nabla) \;\middle|\; 
    \begin{aligned}
        & (E, \nabla) \text{ is a rank } r \text{ polystable log connection} \\
        & \text{with nilpotent residues}
    \end{aligned}
    \right\} \\[1em]
    M^{\Rm{nilp}}_{\Rm{Dol}}\brac*{\bar{X}, D, r} &= \left\{ (E, \theta) \;\middle|\; 
    \begin{aligned}
        & (E, \theta) \text{ is a rank } r \text{ polystable log Higgs bundle} \\
        & \text{with nilpotent residues and } c_i(E) = 0 \text{ for all }i
    \end{aligned}
    \right\}
\end{align*}
and as a corollary of \cite[Theorem 1.1]{TM09}, we get a bijective comparison map: 
\[SM_{\brac*{\bar{X}, D}}^{\Rm{nilp}}: M^{\Rm{nilp}}_{\Rm{Dol}}\brac*{\bar{X}, D, r}(\CC) \to M^{\Rm{nilp}}_{\Rm{DR}}\brac*{\bar{X}, D, r}(\CC)\]
which is a homeomorphism, in the Euclidean topology and when $\bar{X}$ is a curve, by \cite[Theorem 7.13]{BBT24}. They were also able to prove that $SM_{\brac*{\bar{X}, D}}^{\Rm{nilp}}$ is a continuous bijection in higher dimensions, as follows. 

\begin{definition*}
  A Lefschetz curve is a smooth complete intersection curve $\iota: \bar{C} \to \bar{X}$ such that $\bar{C}$ is transverse to $D$, and we have a surjection $\pi_1\brac*{\bar{C} \backslash D} \twoheadrightarrow \pi_1\brac*{\bar{X} \backslash D}$. 
\end{definition*}

Such a Lefschetz curve $\iota: \bar{C} \hookrightarrow \bar{X}$ induces a closed embedding $i^*_{\Rm{DR}}: M^{\Rm{nilp}}_{\Rm{DR}}\brac*{\bar{X}, D, r} \to M^{\Rm{nilp}}_{\Rm{DR}}\brac*{\bar{C}, D \cap \bar{C}, r}$
and a diagram:
\[\begin{tikzcd}
	{M^{\Rm{nilp}}_{\Rm{Dol}}\brac*{\bar{X}, D, r}} && {M^{\Rm{nilp}}_{\Rm{Dol}}\brac*{\bar{C}, D \cap \bar{C}, r}} \\
	\\
	{M^{\Rm{nilp}}_{\Rm{DR}}\brac*{\bar{X}, D, r}} && {M^{\Rm{nilp}}_{\Rm{DR}}\brac*{\bar{C}, D \cap \bar{C}, r}}
	\arrow["{i_{\Rm{Dol}}^*}", from=1-1, to=1-3]
	\arrow["{SM_{\brac*{\bar{X}, D}}^{\Rm{nilp}}}"', from=1-1, to=3-1]
	\arrow["{SM_{\brac*{\bar{C}, D \cap \bar{C}}}^{\Rm{nilp}}}", from=1-3, to=3-3]
	\arrow["{i_{\Rm{DR}}^*}"', from=3-1, to=3-3]
\end{tikzcd}\]
\tab Since $SM_{\brac*{\bar{C}, D \cap \bar{C}}}^{\Rm{nilp}}$ is a homeomorphism and $i_{\Rm{DR}}^*$ is a closed immersion, $SM_{\brac*{\bar{X}, D}}^{\Rm{nilp}}$ is continuous, thus a continuous bijection as desired. Our main result is the following:
\begin{theorem} 
  There exists a Lefschetz curve $\bar{C} \subset \bar{X}$ so that the restriction map $i_{\Rm{Dol}}^*$ is proper.
\end{theorem}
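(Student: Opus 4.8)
The plan is to factor $i_{\Rm{Dol}}^*$ through the Hitchin fibration, where properness is available, and then transfer properness downward by a purely topological cancellation. Write $B_{\bar X} = \bigoplus_{i=1}^{r} H^0\brac*{\bar X, \operatorname{Sym}^i \Omega^1_{\bar X}(\log D)}$ and let $h_{\bar X} : M^{\Rm{nilp}}_{\Rm{Dol}}\brac*{\bar X, D, r} \to B_{\bar X}$ be the Hitchin map sending $(E,\theta)$ to the coefficients of the characteristic polynomial of $\theta$; nilpotency of the residues ensures these coefficients extend as (indeed pole-free) sections, so $h_{\bar X}$ is well defined into $B_{\bar X}$. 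Define $h_{\bar C}$ and $B_{\bar C}$ analogously. Because $\theta|_{\bar C}$ is the image of $\theta$ under the bundle surjection $\Omega^1_{\bar X}(\log D)|_{\bar C} \twoheadrightarrow \Omega^1_{\bar C}(\log (D \cap \bar C))$ — here I use that $\bar C$ is transverse to $D$ — taking characteristic polynomials commutes with restriction, so $h_{\bar C} \circ i_{\Rm{Dol}}^* = \rho \circ h_{\bar X}$, where $\rho : B_{\bar X} \to B_{\bar C}$ is the induced linear restriction map on Hitchin bases.

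First I would fix the curve. By the Lefschetz hyperplane theorem for quasiprojective varieties, a general smooth complete intersection curve $\bar C$ transverse to $D$ of sufficiently high multidegree is a Lefschetz curve. The key point is that, enlarging the degree if necessary, $\rho$ becomes injective. To see this, consider the conormal sequence $0 \to N^\vee_{\bar C/\bar X} \to \Omega^1_{\bar X}(\log D)|_{\bar C} \to \Omega^1_{\bar C}(\log(D \cap \bar C)) \to 0$, whose $i$-th symmetric power exhibits the kernel of the projection $\operatorname{Sym}^i\Omega^1_{\bar X}(\log D)|_{\bar C} \to \operatorname{Sym}^i\Omega^1_{\bar C}(\log(D\cap\bar C))$ as $N^\vee_{\bar C/\bar X} \otimes \operatorname{Sym}^{i-1}\Omega^1_{\bar X}(\log D)|_{\bar C}$. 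Thus $\ker \rho_i = H^0\brac*{\bar X, \mathcal K_i}$, where $\mathcal K_i$ fits into $0 \to \operatorname{Sym}^i\Omega^1_{\bar X}(\log D)(-\bar C) \to \mathcal K_i \to \iota_*\brac*{N^\vee_{\bar C/\bar X}\otimes\operatorname{Sym}^{i-1}\Omega^1_{\bar X}(\log D)|_{\bar C}} \to 0$. For $\bar C$ a complete intersection of divisors of large degree, both the first term (by Serre vanishing) and the conormal quotient (since $N^\vee_{\bar C/\bar X}$ is very negative while the remaining factor is a fixed bundle) have vanishing $H^0$, so $H^0(\bar X, \mathcal K_i) = 0$ for all $1 \le i \le r$ and $\rho$ is injective. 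An injective linear map of finite-dimensional vector spaces is a closed immersion, hence proper.

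The remaining ingredient is properness of the Hitchin map $h_{\bar X}$ itself, in the logarithmic setting with nilpotent residues and vanishing Chern classes; granting this, $h_{\bar C}\circ i_{\Rm{Dol}}^* = \rho\circ h_{\bar X}$ is a composite of proper maps, hence proper. The conclusion is then formal: for any compact $K \subseteq M^{\Rm{nilp}}_{\Rm{Dol}}\brac*{\bar C, D\cap\bar C, r}$ we have $(i_{\Rm{Dol}}^*)^{-1}(K) \subseteq (h_{\bar C}\circ i_{\Rm{Dol}}^*)^{-1}\brac*{h_{\bar C}(K)}$; the right-hand side is compact because $h_{\bar C}(K)$ is compact and $h_{\bar C}\circ i_{\Rm{Dol}}^*$ is proper, and the left-hand side is closed inside it because $K$ is closed in the Hausdorff space $M^{\Rm{nilp}}_{\Rm{Dol}}\brac*{\bar C, D \cap \bar C, r}$ and $i_{\Rm{Dol}}^*$ is continuous. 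A closed subset of a compact set is compact, so $i_{\Rm{Dol}}^*$ is proper, which is the assertion of the theorem (and, combined with the commuting square and the closed immersion $i_{\Rm{DR}}^*$, upgrades $SM^{\Rm{nilp}}_{\brac*{\bar X, D}}$ to a homeomorphism).

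The main obstacle is the input I have quarantined: properness of the higher-dimensional logarithmic Hitchin map. On a curve it is classical and is already packaged into the homeomorphism $SM^{\Rm{nilp}}_{\brac*{\bar C, D\cap\bar C}}$, but in higher dimensions it is the statement that polystable log Higgs bundles with nilpotent residues, trivial Chern classes, and bounded characteristic coefficients form a bounded family. Establishing this is where the real geometric work lives: one controls the underlying bundle through boundedness of semistable sheaves with fixed Chern classes together with the Mehta--Ramanathan restriction theorem for the ample curve $\bar C$, and one controls the Higgs field through its spectral data, the integrability constraint $\theta \wedge \theta = 0$, and the harmonic-metric estimates underlying the nonabelian Hodge correspondence. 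A lesser but still necessary point is to check that the nilpotent-residue condition is closed under the degenerations implicit in properness, so that the limiting object indeed lands in $M^{\Rm{nilp}}_{\Rm{Dol}}\brac*{\bar X, D, r}$ rather than in the larger moduli space of all log Higgs bundles.
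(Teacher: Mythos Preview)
Your overall architecture matches the paper's: both factor through the commutative square of Hitchin maps, both need injectivity of the restriction $\rho$ (the paper calls it $\mathrm{res}$) on Hitchin bases, and both invoke properness of the logarithmic Hitchin map $h_{\bar X}$ to finish. Two differences are worth flagging. First, for the injectivity of $\rho$ the paper does not use cohomological vanishing: instead it picks finitely many point--tangent pairs $(p_j,l_j)$ with $p_j\notin D$ that separate the finite-dimensional space $H^0\brac*{\bar X,\operatorname{Sym}^i\Omega_{\bar X}(\log D)}$, and then cites \cite[Theorem~A.1]{achter2019parameter} to produce a Lefschetz curve through the $p_j$ with prescribed tangent $l_j$. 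Your Serre-vanishing argument is a legitimate alternative, though watch the notation ``$(-\bar C)$'' when $\dim\bar X>2$ and $\bar C$ is not a divisor --- you mean $\otimes I_{\bar C}$, or else run the argument inductively through the intermediate hypersurface sections. The paper's interpolation approach buys you a curve of no particular degree constraint; yours avoids the external interpolation reference at the cost of forcing high multidegree. Second, the input you quarantined is precisely what the paper dispatches by citation: properness of $h_{\bar X}$ is \cite[Theorem~3.8]{AL14}, reading a log Higgs sheaf as a coHiggs sheaf for the trivial Lie algebroid $T_{\bar X}(-\log D)$, together with the check that vanishing Chern classes and nilpotent residues cut out a closed subscheme of the torsion-free moduli space --- so the ``real geometric work'' you anticipate is already in the literature and need not be redone. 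A minor final point: the paper concludes via the valuative criterion, hence obtains scheme-theoretic properness, whereas your compactness argument gives properness on $\CC$-points in the Euclidean topology; the latter is all that the homeomorphism corollary requires.
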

Then, by running the previous argument for the inverse comparison maps, we get: 
\begin{coroll}
  $SM_{\brac*{\bar{X}, D}}^{\Rm{nilp}}$ is a homeomorphism.
\end{coroll}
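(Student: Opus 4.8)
The plan is to obtain the Corollary from the Theorem by re-running the continuity argument recalled above, but with the Dolbeault and de Rham sides interchanged. Since $SM^{\Rm{nilp}}_{\brac*{\bar{X}, D}}$ is already known to be a continuous bijection, it suffices to show that its inverse is continuous. I fix a Lefschetz curve $\bar{C}$ for which $i^*_{\Rm{Dol}}$ is proper, as supplied by the Theorem, and work inside the commuting square displayed above, whose commutativity reads $SM^{\Rm{nilp}}_{\brac*{\bar{C}, D \cap \bar{C}}} \circ i^*_{\Rm{Dol}} = i^*_{\Rm{DR}} \circ SM^{\Rm{nilp}}_{\brac*{\bar{X}, D}}$.

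The first step is to note that $i^*_{\Rm{Dol}}$ is injective. Because $i^*_{\Rm{DR}}$ is a closed immersion (hence injective) and both vertical maps are bijections, solving the commutation relation gives
\[ i^*_{\Rm{Dol}} = \brac*{SM^{\Rm{nilp}}_{\brac*{\bar{C}, D \cap \bar{C}}}}^{-1} \circ i^*_{\Rm{DR}} \circ SM^{\Rm{nilp}}_{\brac*{\bar{X}, D}}, \]
a composite of injective maps. The second step upgrades properness to a closed embedding. These moduli spaces carry the structure of complex analytic spaces, hence are locally compact Hausdorff, so a proper continuous map is closed; a continuous, closed, injective map is a homeomorphism onto its image. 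Thus the Theorem together with the injectivity just established shows that $i^*_{\Rm{Dol}}$ is a closed immersion, so its inverse on its image is continuous.

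Finally, inverting the vertical maps in the square yields
\[ \brac*{SM^{\Rm{nilp}}_{\brac*{\bar{X}, D}}}^{-1} = \brac*{i^*_{\Rm{Dol}}}^{-1} \circ \brac*{SM^{\Rm{nilp}}_{\brac*{\bar{C}, D \cap \bar{C}}}}^{-1} \circ i^*_{\Rm{DR}}, \]
where $\brac*{i^*_{\Rm{Dol}}}^{-1}$ denotes the partial inverse on the image of $i^*_{\Rm{Dol}}$. The rightmost factor is continuous since $i^*_{\Rm{DR}}$ is a closed immersion, the middle factor is continuous since $SM^{\Rm{nilp}}_{\brac*{\bar{C}, D \cap \bar{C}}}$ is a homeomorphism, and the left factor is continuous by the second step. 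Hence the inverse is continuous and $SM^{\Rm{nilp}}_{\brac*{\bar{X}, D}}$ is a homeomorphism.

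The step I expect to require the most care is the passage from properness to a closed embedding: one must confirm that the target Dolbeault moduli space is Hausdorff and invoke properness in its topological guise (preimages of compacta are compact, so images of closed sets are closed), which is precisely what makes the partial inverse of $i^*_{\Rm{Dol}}$ continuous. The injectivity input extracted from the diagram is what lets us promote properness all the way to a closed immersion rather than merely a closed map.
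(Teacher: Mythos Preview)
Your argument is correct and is precisely the content of the paper's one-line justification ``by running the previous argument for the inverse comparison maps.'' You have spelled out the only nontrivial point the paper leaves implicit: that scheme-theoretic properness of $i^*_{\Rm{Dol}}$, together with the injectivity forced by the diagram, upgrades $i^*_{\Rm{Dol}}$ to a closed embedding in the Euclidean topology, so that the role played by $i^*_{\Rm{DR}}$ in the forward direction is now played by $i^*_{\Rm{Dol}}$ in the reverse direction.
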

To get to the proof of theorem 1, we will need some preliminary results and definitions:
\begin{definition*}
  A rank $r$ logarithmic Higgs bundle on $\brac*{\bar{X}, D}$ is a rank $r$ locally free $\sh{O}_{\bar{X}}-$module $E$ together with a $\sh{O}_{\bar{X}}-$linear morphism $\theta: E \to \Omega_{\bar{X}}(\log D) \otimes_{\sh{O}_{\bar{X}}} E$ such that $\theta \wedge \theta = 0$, called the Higgs field. Locally, $\theta$ is just a matrix of logarithmic 1-forms, hence we can talk about its characteristic polynomial $\Rm{char}(\theta)$. The coefficients are then in $\bigoplus_{i = 0}^r H^0\brac*{\bar{X}, \Rm{Sym}^i \Omega_{\bar{X}}(\log D)}$, and we get the Hitchin map: 
  \[M^{\Rm{nilp}}_{\Rm{Dol}}\brac*{\bar{X}, D, r} \xrightarrow{h_{\brac*{\bar{X}, D}}} \bigoplus_{i = 0}^r H^0\brac*{\bar{X}, \Rm{Sym}^i \Omega_{\bar{X}}(\log D)}\]
  \tab Now if $E$ is only torsion-free, then there is an open $U \subseteq \bar{X}$, with $\Rm{codim}_{\bar{X}}\brac*{\bar{X} - U} \geq 2$, such that $E\vert_U$ is locally free. We can then talk about the characteristic polynomial of $(E\vert_U, \theta\vert_U)$. Since $\bar{X}$ is smooth (in particular, normal), the coefficients (over $U$) extends uniquely to sections over $\bar{X}$. Hence, if we define $M_{\Rm{Dol}}^{\Rm{tf}}\brac*{\bar{X}, D, P}$ to be the moduli space of Gieseker polystable torsion-free logarithmic Higgs sheaves with Hilbert polynomial $P$, then we have another Hitchin map with a bigger total space:
  \[M^{\Rm{tf}}_{\Rm{Dol}}\brac*{\bar{X}, D, P} \xrightarrow{h'_{\brac*{\bar{X}, D, P}}} \bigoplus_{i = 0}^r H^0\brac*{\bar{X}, \Rm{Sym}^i \Omega_{\bar{X}}(\log D)}\]
\end{definition*}

\begin{prop}{\cite[Theorem 3.8]{AL14}}
  The Hitchin map $h_{\brac*{\bar{X}, D}}$ is proper.
\end{prop}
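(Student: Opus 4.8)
The plan is to verify the valuative criterion of properness. Since $M^{\Rm{nilp}}_{\Rm{Dol}}\brac*{\bar{X}, D, r}$ is of finite type and the Hitchin base $\bigoplus_{i=0}^r H^0\brac*{\bar{X}, \Rm{Sym}^i \Omega_{\bar{X}}(\log D)}$ is affine, hence separated, it is enough to treat an arbitrary discrete valuation ring $R$ with fraction field $K$ and residue field $\kk$. Concretely, given a $K$-point $(E_K, \theta_K)$ of the moduli space together with a lift of its Hitchin image to an $R$-point $\mathbf{a} \in \bigoplus_i H^0\brac*{\bar{X}_R, \Rm{Sym}^i \Omega(\log D)}$, I must produce, after a finite extension of $R$, an $R$-point $(E_R, \theta_R)$ restricting to $(E_K, \theta_K)$ over the generic point and with $h(E_R, \theta_R) = \mathbf{a}$.

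First I would fix any flat, torsion-free coherent extension $\tilde{E}$ of $E_K$ over $\bar{X}_R = \bar{X} \times_{\CC} \Spec R$, for instance the saturation of $E_K$ inside a chosen locally free sheaf. The Higgs field, a priori only defined generically, then extends to a \emph{meromorphic} logarithmic field $\tilde{\theta}\colon \tilde{E} \to \tilde{E} \otimes \Omega_{\bar{X}_R}(\log D)\brac*{N \cdot \bar{X}_0}$ with some pole order $N \geq 0$ along the special fibre $\bar{X}_0$. The entire content of the argument is to modify $\tilde{E}$ until $N = 0$.

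The main step is a semistable reduction of Langton type, adapted to the pair. I would run the usual procedure of elementary modifications of $\tilde{E}$ along $\bar{X}_0$, but always choosing the modifying subsheaf of the special fibre to be $\tilde{\theta}$-invariant, so that the Higgs structure persists at every stage. The decisive input is that the coefficients of $\Rm{char}(\tilde{\theta})$ are \emph{regular} over all of $\Spec R$, which is exactly the hypothesis that the Hitchin image $\mathbf{a}$ is defined over $R$. This bounds the eigenvalues of $\tilde{\theta}$, and hence the pole order $N$, uniformly; combined with the usual Harder--Narasimhan monotonicity of Langton's argument, it forces a discrete invariant of the special fibre to strictly decrease at each modification. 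The procedure therefore terminates after finitely many steps, producing an extension on which $\tilde{\theta}$ is a genuine (pole-free) logarithmic Higgs field over $\Spec R$, while Langton's theorem guarantees that the limiting underlying sheaf is Gieseker semistable; replacing it by the associated graded of a Jordan--H\"older filtration in the category of Higgs sheaves yields a polystable representative.

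The step I expect to be the main obstacle is precisely this termination: one must simultaneously arrange the modifications to respect $\tilde{\theta}$ \emph{and} to reduce the pole, and then isolate a strictly monotone integer invariant controlled by the bounded spectral data — the interaction between the Higgs filtration and the sheaf-theoretic Harder--Narasimhan filtration is the delicate point. The remaining verifications are comparatively formal: nilpotency of the residues is the closed condition that the residual part of $\Rm{char}(\tilde{\theta})$ vanishes, which survives specialization; the Chern class conditions are preserved by flatness; and one records that the limiting sheaf, being torsion-free with the spectral data of a nilpotent-residue Higgs bundle, represents a point of the moduli space (passing, if necessary, through the larger torsion-free space $M^{\Rm{tf}}_{\Rm{Dol}}$ and its Hitchin map $h'$). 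This establishes the valuative criterion and hence properness of $h_{\brac*{\bar{X}, D}}$.
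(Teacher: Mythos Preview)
Your proposal takes a genuinely different route from the paper. The paper does not reprove properness from first principles; the proposition is explicitly attributed to \cite[Theorem 3.8]{AL14}, and the paper's argument consists only of explaining why that theorem applies and then passing to a closed sublocus. Concretely: a logarithmic Higgs sheaf is a $T_{\bar{X}}(-\log D)$-coHiggs sheaf for the trivial Lie algebroid structure, so the cited result already gives properness of the torsion-free Hitchin map $h'_{(\bar{X},D,P)}$; one then observes that $M^{\Rm{nilp}}_{\Rm{Dol}}(\bar{X},D,r)$ sits inside $M^{\Rm{tf}}_{\Rm{Dol}}(\bar{X},D,r\cdot P_{\sh{O}_{\bar{X}}})$ as a closed subscheme---vanishing Chern classes pick out a union of connected components since Chern classes are locally constant in flat families, and nilpotent residue is a further closed condition---whence the restricted Hitchin map is proper.

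What you sketch instead is a direct proof via the valuative criterion and Langton-type elementary modifications adapted to the Higgs structure. This is essentially the content of the reference the paper is invoking (and, in the non-logarithmic case, of Nitsure and Simpson before it). Your outline is sound and would succeed if carried through; the termination of the modification procedure under bounded spectral data, which you correctly flag as the crux, is precisely the substantive work the paper outsources to \cite{AL14}. The paper's approach is far more economical for its purposes, while yours is self-contained but amounts to reproving the cited theorem rather than using it.
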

\begin{proof}
  Let us say a few words about why this is implied by \cite[Theorem 3.8]{AL14}. That the torsion-free Hitchin map $h'_{\brac*{\bar{X}, D, P}}$ is proper is just a special case of the cited theorem. Indeed, a logarithmic Higgs sheaf is just a $T_{\bar{X}}(-\log D)-$coHiggs sheaf, viewing $T_{\bar{X}}(-\log D)$ as a trivial Lie algebroid. \\
  \tab $M^{\Rm{nilp}}_{\Rm{Dol}}\brac*{\bar{X}, D, r}$ is the subscheme of $M^{\Rm{tf}}_{\Rm{Dol}}\brac*{\bar{X}, D, r\cdot P_{\sh{O}_{\bar{X}}}}$, where $P_{\sh{O}_{\bar{X}}}$ is the Hilbert polynomial of $\sh{O}_{\bar{X}}$, cutout by requiring all Chern classes to vanish (which forces the torsion-free sheaves to be locally free) and then requiring the residues of the Higgs fields to be nilpotent. If we can show $M^{\Rm{nilp}}_{\Rm{Dol}}\brac*{\bar{X}, D, r}$ is a closed subscheme, then $h_{\brac*{\bar{X}, D}}$, which is the restriction of a proper map to a closed subset, must be proper as desired. \\
  \tab Now, Chern classes are locally constant in a flat family: consider torsion-free $\sh{E}$ on $\bar{X} \times S$ with two fibers $\bar{X}_s \xrightarrow{\iota_s} \bar{X} \times S, \bar{X}_t \xrightarrow{\iota_t} \bar{X} \times S$, we claim that if $S$ is connected then $c_i(\iota_s^*\sh{E}) \simeq c_i(\iota_t^*\sh{E}) \in H^{2i}\brac*{\bar{X}, \ZZ}$. By naturality of Chern classes, these are just $\iota_s^*c_i(\sh{E})$ and $\iota_t^*c_i(\sh{E})$. Since $S$ connected, after identifying $\bar{X}_s \simeq \bar{X}_t \simeq \bar{X}$ it's easy to see $\iota_s$ and $\iota_t$ are homotopic (via a path $\gamma$ connecting $s, t \in S$) hence the two pullbacks must agree. Thus requiring vanishing Chern classes pickout a union of connected components of $M^{\Rm{tf}}_{\Rm{Dol}}\brac*{\bar{X}, D, r\cdot P_{\sh{O}_{\bar{X}}}}$, hence closed. Further requiring nilpotent residue is also a closed condition, see \cite[p. 66]{BBT24}. 
\end{proof}
Next, we have a commutative diagram:
\[\begin{tikzcd}
	{M^{\Rm{nilp}}_{\Rm{Dol}}\brac*{\bar{X}, D, r}} && {\bigoplus_{i = 0}^r H^0\brac*{\bar{X}, \Rm{Sym}^i \Omega_{\bar{X}}(\log D)}} \\
	\\
	{M^{\Rm{nilp}}_{\Rm{Dol}}\brac*{\bar{C}, D \cap \bar{C}, r}} && {\bigoplus_{i = 0}^r H^0\brac*{\bar{C}, \Rm{Sym}^i \Omega_{\bar{C}}(\log D \cap \bar{C})}}
	\arrow["{h_{\brac*{\bar{X}, D}}}", from=1-1, to=1-3]
	\arrow["{i_{\Rm{Dol}}^*}"', from=1-1, to=3-1]
	\arrow["{\Rm{res}}", from=1-3, to=3-3]
	\arrow["{h_{\brac*{\bar{C}, D \cap \bar{C}}}}"', from=3-1, to=3-3]
\end{tikzcd}\]
where the right vertical arrow is the composition 
\[H^0\brac*{\bar{X}, \Rm{Sym}^i \Omega_{\bar{X}}(\log D)} \to H^0\brac*{\bar{C}, \Rm{Sym}^i \Omega_{\bar{X}}(\log D)\vert_{\bar{C}}} \to H^0\brac*{\bar{C}, \Rm{Sym}^i \Omega_{\bar{C}}(\log D \cap \bar{C})}\]
\begin{prop}
  There exists a Lefschetz curve $\bar{C}$ so that $\Rm{res}$ is injective.
\end{prop}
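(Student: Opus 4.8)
The plan is to read the degree-$i$ component of $\Rm{res}$ as the pullback of symmetric logarithmic differentials and to force its injectivity by making $\bar{C}$ pass through a finite configuration of points with prescribed general tangent directions. Note first that $\Rm{res}$ is a $\CC$-linear map of finite-dimensional vector spaces, so injectivity is the whole content. Fixing $i$, a section $\omega \in H^0(\bar{X}, \Rm{Sym}^i\Omega_{\bar{X}}(\log D))$ is a fiberwise-homogeneous degree-$i$ function on the total space of $T_{\bar{X}}(-\log D)$, and the composite defining the right vertical arrow sends $\omega$ to its honest pullback $\iota^*\omega$, the symmetric differential on $\bar{C}$ gotten by restricting $\omega$ to tangent directions of $\bar{C}$. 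Because $\bar{C}$ is transverse to $D$, at each $p\in\bar{C}$ there is an inclusion $T_p\bar{C}(-\log(D\cap\bar{C})) \subseteq T_p\bar{X}(-\log D)$, and $\omega$ lies in the kernel exactly when $\omega(p)(v_p^{\otimes i}) = 0$ for every $p\in\bar{C}$ and every $v_p$ spanning $T_p\bar{C}$. Thus it suffices to produce one Lefschetz curve $\bar{C}$ so that, for every $0\le i\le r$, no nonzero $\omega$ has $\iota^*\omega = 0$.

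The second step is a separation-of-sections statement on the projectivized log tangent bundle $Z = \PP(T_{\bar{X}}(-\log D))$, of dimension $2n-1$ with $n = \dim\bar{X}$. A section $\omega$ corresponds to a section $s_\omega$ of the relative $\mathcal{O}(i)$ on $Z$ whose tautological evaluation is $s_\omega(p,[v]) = \omega(p)(v^{\otimes i})$. For $\omega\neq 0$ the zero locus $Z(s_\omega)\subsetneq Z$ is a proper closed subset: a nonzero section is nonvanishing at a general point $p$, and a nonzero homogeneous form is nonzero in a general direction $[v]$. A standard induction then shows that evaluation at sufficiently many general jets $(p_j,[v_j])\in Z$ separates sections, i.e. the map $\omega\mapsto \bigl(\omega(p_j)(v_j^{\otimes i})\bigr)_j$ is injective; at each step a nonzero subspace cannot vanish at a general jet, so a general jet drops its dimension by one. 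Choosing the finitely many jets general enough to work simultaneously for every $i\le r$, we fix a configuration $\{(p_j,[v_j])\}$ on which evaluation is injective on each $H^0(\bar{X},\Rm{Sym}^i\Omega_{\bar{X}}(\log D))$.

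The third step realizes these jets on a Lefschetz curve. I would take $\bar{C} = H_1\cap\cdots\cap H_{n-1}$ with $H_k\in|A^{m}|$ for $A$ very ample and $m\gg 0$. For $m$ large $|A^m|$ separates $1$-jets at the $p_j$, so each $H_k$ may be required to pass through every $p_j$ with a tangent hyperplane containing $[v_j]$, arranged in general position so that $\bigcap_k T_{p_j}H_k = [v_j]$, giving $T_{p_j}\bar{C} = [v_j]$. These are finitely many linear conditions, leaving a very large residual subsystem; a general member of it is smooth and transverse to $D$ by Bertini, has the prescribed tangent lines, and — since the $H_k$ remain highly ample after imposing finitely many conditions — yields $\pi_1(\bar{C}\setminus D)\twoheadrightarrow \pi_1(\bar{X}\setminus D)$ by the Lefschetz hyperplane theorem for complements, so $\bar{C}$ is a Lefschetz curve. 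For this $\bar{C}$, any $\omega$ in the kernel of $\Rm{res}$ satisfies $\iota^*\omega = 0$, hence $\omega(p_j)(v_j^{\otimes i}) = 0$ for all $j$, so $\omega = 0$ by the previous step; therefore $\Rm{res}$ is injective.

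The main obstacle is the third step: simultaneously interpolating the prescribed general $1$-jets by a complete intersection, keeping the curve smooth and transverse to $D$, and retaining enough positivity for the $\pi_1$-surjectivity. Each is a nonempty or open-dense condition once $m\gg 0$, but verifying that the jet-interpolation locus genuinely meets the transversality and Lefschetz loci is delicate. The cleanest rigorous formulation replaces the ``choose jets then realize'' order by an incidence-variety count: for each fixed $\omega\neq 0$ one bounds the \emph{codimension} in the parameter space of Lefschetz complete intersections of those $\bar{C}$ with $\iota^*\omega = 0$, and must show this codimension exceeds $\dim\PP\bigl(H^0(\bar{X},\Rm{Sym}^i\Omega_{\bar{X}}(\log D))\bigr)$ so that the projection from the incidence variety is not dominant and the general $\bar{C}$ avoids every nonzero $\omega$. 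Establishing that codimension bound for $m\gg 0$ — equivalently, that the jets of a general high-degree complete intersection dominate $Z$ strongly enough to beat the positive-dimensional family of $\omega$'s — is where the real work lies; I expect a purely cohomological vanishing argument to fail here, since the kernel of $\Rm{Sym}^i\Omega_{\bar{X}}(\log D)|_{\bar{C}}\to \Rm{Sym}^i\Omega_{\bar{C}}(\log(D\cap\bar{C}))$ can acquire sections on $\bar{C}$ that simply do not extend to $\bar{X}$.
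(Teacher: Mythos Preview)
Your proposal is correct and follows essentially the same three-step route as the paper: interpret membership in $\ker(\Rm{res})$ as the fiberwise vanishing $s(p)(T_p\bar{C})=0$, choose finitely many point--direction pairs $(p_j,l_j)$ on which evaluation separates sections by an inductive dimension drop (this is exactly the paper's Claim~5), and then produce a Lefschetz curve realizing those jets. Your final paragraph overstates the difficulty of the third step: the paper dispatches it by citing \cite[Theorem~A.1]{achter2019parameter}, parts~(3) and~(4), which give a smooth general complete intersection through prescribed points with prescribed tangent lines, and then observes that transversality to $D$ and the $\pi_1$-surjectivity (via \cite[Theorem~1.1.3(ii)]{hamm1985lefschetz}) are open conditions on the residual linear system, so a general member satisfies all of them; the incidence-variety codimension count you sketch is not needed.
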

\begin{proof}
  Let's first examine what it means to be in the kernel of the map
  \[H^0\brac*{\bar{X}, \Rm{Sym}^i \Omega_{\bar{X}}(\log D)} \xrightarrow{\Rm{res}} H^0\brac*{\bar{C}, \Rm{Sym}^i \Omega_{\bar{C}}(\log D \cap \bar{C})}\]
  \tab Suppose that $s \in H^0\brac*{\bar{X}, \Rm{Sym}^i \Omega_{\bar{X}}(\log D)}$ maps to the zero section under $\Rm{res}$. At each point $p \in \bar{C}, p \notin D$, if we restrict $s$ to the fibers at $p$, we can view $s(p) \in \Rm{Sym}^i \brac*{T_p\bar{X}}^\vee$ as a homogeneous polynomial on $T_p\bar{X}$. The map $\Rm{res}$ then just restricts $s(p)$ to the line $T_p\bar{C} \subset T_p\bar{X}$. As a result, $\Rm{res}(s) = 0$ implies that:
  \[s(p)\brac*{T_p\bar{C}} = 0 \tab \forall\ p \in \bar{C}, p \notin D\]
  \tab Thus it suffices to prove that there exists a Lefschetz curve $\bar{C}$ such that for each $s \in H^0\brac*{\bar{X}, \Rm{Sym}^i \Omega_{\bar{X}}(\log D)}$, there exists $p \in \bar{C}, p \notin D$ such that $T_p\bar{C} \not\subset V(s(p)) \subset T_p\bar{X}$. We first prove the following claim: 
  \begin{claim}
    There exists a finite number of pairs $\set*{(p_j, l_j)}_{j = 1}^t$ with $p_j \in \bar{X}, p_j \notin D$, $l_j$ a general 1-dimensional $\CC-$subspace of $T_{p_j}\bar{X}$ such that for every $s \in H^0\brac*{\bar{X}, \Rm{Sym}^i \Omega_{\bar{X}}(\log D)}$, 
    \[s(p_j)(l_j) \neq 0\]
    for at least one of the pairs. 
  \end{claim}
  \begin{proof}[Proof of claim]
    Let $s_1 \in H^0\brac*{\bar{X}, \Rm{Sym}^i \Omega_{\bar{X}}(\log D)}$ be a nonzero section, then there is some point $p_1 \in \bar{X}, p_1 \notin D$ such that $s_1(p_1) \in \Rm{Sym}^i\brac*{T_{p_1}\bar{X}}^\vee$ is nonzero. This is a polynomial on $T_{p_1}\bar{X}$, so the zero locus $V(s_1(p_1))$ is a hypersurface. Thus we can just pick a line $l_1$ not contained in $V(s_1(p_1))$. The pair $(p_1, l_1)$ gives a linear functional:
    \[\epsilon_1: H^0\brac*{\bar{X}, \Rm{Sym}^i \Omega_{\bar{X}}(\log D)} \to \Rm{Sym}^i\brac*{l_1^\vee} \simeq \CC, \tab s \mapsto s(p_1)\vert_{l_1}\]
    and $\epsilon_1(s_1) \neq 0$. So $\dim \ker(\epsilon_1) < \dim H^0\brac*{\bar{X}, \Rm{Sym}^i \Omega_{\bar{X}}(\log D)}$. If $\ker(\epsilon_1) = 0$ we are done, otherwise pick $s_2 \in \ker(\epsilon_1)$ and repeat the process. Since $H^0\brac*{\bar{X}, \Rm{Sym}^i \Omega_{\bar{X}}(\log D)}$ is finite dimensional, we end up with a finite collection $\epsilon_1, \dots, \epsilon_t$ such that 
    \[\bigcap_{j = 1}^t \ker(\epsilon_j) = 0\]
    and the corresponding pairs $(p_j, l_j)$ satisfy the claim. 
  \end{proof}
  Our proposition is reduced to showing that there exists a Lefschetz curve $\bar{C}$ which:
    \begin{itemize}
      \item passes through the points $\set*{p_j}_{j = 1}^t$;
      \item and for each $j$, $T_{p_j}\bar{C} = l_j$. 
    \end{itemize}
  \tab Up to picking a suitable $\sh{O}_{\bar{X}}(1)$, this follows from \cite[Theorem A.1]{achter2019parameter} part (3) and (4). Indeed, their theorem implies that we can pick a general linear section $\bar{C}$ which is smooth, and passes through $\set*{p_j}$ with prescribed tangents. Now, being transverse to $D$ and inducing surjection of fundamental groups are also open conditions (by \cite[Theorem 1.1.3(ii)]{hamm1985lefschetz}), so there exists a Lefschetz choice. 
\end{proof}
We are finally ready for the main result:
\begin{proof}[Proof of Theorem 1]
  We will use the valuative criterion for properness: given a discrete valuation ring $R$, with $K = \Rm{Frac}(R)$, and a diagram
  \[\begin{tikzcd}
	{\Spec K} && {M^{\Rm{nilp}}_{\Rm{Dol}}\brac*{\bar{X}, D, r}} \\
	\\
	{\Spec R} && {M^{\Rm{nilp}}_{\Rm{Dol}}\brac*{\bar{C}, D \cap \bar{C}, r}}
	\arrow[from=1-1, to=1-3]
	\arrow[from=1-1, to=3-1]
	\arrow["{{i_{\Rm{Dol}}^*}}"', from=1-3, to=3-3]
	\arrow[dashed, from=3-1, to=1-3]
	\arrow[from=3-1, to=3-3]
  \end{tikzcd}\]
  we want to show that the dotted arrow exists. Consider the diagram
  \[\begin{tikzcd}
	{\Spec K} && {M^{\Rm{nilp}}_{\Rm{Dol}}\brac*{\bar{X}, D, r}} && {\bigoplus_{i = 0}^r H^0\brac*{\bar{X}, \Rm{Sym}^i \Omega_{\bar{X}}(\log D)}} \\
	\\
	{\Spec R} && {M^{\Rm{nilp}}_{\Rm{Dol}}\brac*{\bar{C}, D \cap \bar{C}, r}} && {\bigoplus_{i = 0}^r H^0\brac*{\bar{C}, \Rm{Sym}^i \Omega_{\bar{C}}(\log D \cap \bar{C})}}
	\arrow[from=1-1, to=1-3]
	\arrow[from=1-1, to=3-1]
	\arrow["{{h_{\brac*{\bar{X}, D}}}}", from=1-3, to=1-5]
	\arrow["{{i_{\Rm{Dol}}^*}}"', from=1-3, to=3-3]
	\arrow["{{\Rm{res}}}", from=1-5, to=3-5]
	\arrow[dashed, from=3-1, to=1-3]
	\arrow[from=3-1, to=1-5]
	\arrow[from=3-1, to=3-3]
	\arrow["{{h_{\brac*{C, D \cap C}}}}"', from=3-3, to=3-5]
  \end{tikzcd}\]
  \tab The long diagonal arrow exists (and makes the diagram commute) since $\Rm{res}$ is an injective map of vector spaces (equivalently, a closed embedding of affine spaces). Then the dotted arrow must exist since $h_{\brac*{\bar{X}, D}}$ is proper. Finally, it's a trivial diagram chase to check that the left square commutes.
\end{proof}

\subsection*{Acknowledgement} I would like to thank my advisor, Benjamin Bakker, for his patience and support, as well as numerous helpful conversations. 

\printbibliography
\end{document}